\newtheorem{theorem}{Theorem}
\theoremstyle{plain}
\newtheorem{corollary}{Corollary}
\newtheorem{definition}{Definition}
\newtheorem{example}{Example}
\newtheorem{lemma}{Lemma}
\newtheorem{problem}{Problem}
\newtheorem{proposition}{Proposition}
\numberwithin{equation}{section}
\begin{document}

\title[An oscillatory Fermat-Torricelli tree]{An oscillatory Fermat-Torricelli tree in $\mathbb{R}^{2}$}
\author{Anastasios N. Zachos}
\address{Greek Ministry of Education, Greece}
\email{azachos@gmail.com} \keywords{oscillation, weighted
Fermat-Torricelli problem, weighted Fermat-Torricelli point,
weighted Fermat-Torricelli tree, oscillatory Fermat-Torricelli
tree} \subjclass{51M04, 51E10, 70B5, 70M20}

\begin{abstract}
We obtain an important generalization of the mechanical solution
given by S. Gueron and R. Tessler w.r. to the weighted
Fermat-Torricelli problem which derives a new structure of
solutions which may be called oscillatory Fermat-Torrlcelli trees.
The weighted Fermat-Torricelli problem in $\mathbb{R}^{2}$ states
that: Given three points in $\mathbb{R}^{2}$  and a positive real
number (weight) which correspond to each point , find the point
(weighted Fermat-Torricelli point) such that the sum of the
weighted distances to these three points is minimized. By applying
the mechanical device of Pick and Polya the oscillatory tree
solution is a new solution w.r to the weighted Fermat-Torricelli
problem for a given isosceles triangle with corresponding two
equal weights at the vertices of the base segment. it is worth
mentioning that at after time $t$ the oscillatory knot of the
mechanical system passes from the weighted Fermat-Torricelli point
with non zero velocity. Furthermore, we give a numerical example
to verify the structure of an oscillatory Fermat-Torricelli tree
for a given isosceles triangle with equal weights.

\end{abstract}\maketitle

\section{Introduction}

We start by stating the weighted Fermat-Torricelli problem in
$\mathbb{R}^{2}:$

\begin{problem}
Given three points $A_{1}=(x_{1},y_{1}),$ $A_{2}=(x_{2},y_{2}),$
$A_{3}=(x_{3},y_{3}),$ find a point $O$ which minimizes the
objective function

\begin{equation}\label{obj1}
f(x,y)=\sum_{i=1}^{3}w_{i}\sqrt{(x-x_{j})^{2}+(y-y_{j})^{2}}
\end{equation}

where $w_{i}$ is a positive real number (weight) which corresponds
to $A_{i}.$

\end{problem}

The solution of the weighted Fermat-Torricelli problem (Problem~1)
is called the weighted Fermat-Torricelli tree, which consists of
the union of the three edges (branches) $A_{1}O,$ $A_{2}O,$
$A_{3}O$ which meet at the weighted Fermat-Torricelli point $O.$

By replacing $w_{1}=w_{2}=w_{3}$ in (\ref{obj1}), we obtain the
(unweighted) Fermat-Torricelli tree. The (unweighted
)Fermat-Torricelli problem was first stated by Pierre de Fermat
(1643) and first solved by E. Torricelli.

The existence and uniqueness of the weighted Fermat-Torricelli
tree and a complete characterization of the "floating case" and
"absorbed case" has been established by Y. S Kupitz and H. Martini
(see \cite{Kup/Mar:97}, theorem 1.1, reformulation 1.2 page 58,
theorem 8.5 page 76, 77). A particular case of this result for
three non-collinear points in $\mathbb{R}^{2},$ is given by the
following theorem:

\begin{theorem}{\cite{BolMa/So:99},\cite{Kup/Mar:97}}\label{theor1}
Let there be given three non-collinear points $A_{1}, A_{2},
A_{3}\in\mathbb{R}^{2}$ with corresponding positive
weights $w_{1}, w_{2}, w_{3}.$ \\
(a) The weighted Fermat-Torricelli point $O$ exists and is
unique. \\
(b) If for each point $A_{i}\in\{A_{1},A_{2},A_{3}\}$

\begin{equation}\label{floatingcase}
\|{\sum_{j=1, i\ne j}^{3}w_{j}\vec u(A_i,A_j)}\|>w_i,
\end{equation}

 for $i,j=1,2,3$  holds,
 then \\
 ($b_{1}$) the weighted Fermat-Torricelli point $O$ (weighted floating equilibrium point) does not belong to $\{A_{1},A_{2},A_{3}\}$
 and \\
 ($b_{2}$)

\begin{equation}\label{floatingequlcond}
 \sum_{i=1}^{3}w_{i}\vec u(O,A_i)=\vec 0,
\end{equation}
where $\vec u(A_{k} ,A_{l})$ is the unit vector from $A_{k}$ to
$A_{l},$ for $k,l\in\{0,1,2,3\}$
 (Weighted Floating Case).\\
 (c) If there is a point $A_{i}\in\{A_{1},A_{2},A_{3}\}$
 satisfying
 \begin{equation}
 \|{\sum_{j=1,i\ne j}^{3}w_{j}\vec u(A_i,A_j)}\|\le w_i,
\end{equation}
then the weighted Fermat-Torricelli point $O$ (weighted absorbed
point) coincides with the point $A_{i}$ (Weighted Absorbed Case).
\end{theorem}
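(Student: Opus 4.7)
The plan is to treat the objective $f(P)=\sum_{i=1}^{3}w_{i}\|P-A_{i}\|$ as a convex function on $\mathbb{R}^{2}$ and apply standard tools from convex analysis. I would prove (a) by a coercivity and strict--convexity argument, and I would prove (b) and (c) simultaneously by using the subdifferential optimality condition $0\in\partial f(O)$.

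For part (a), existence follows from the continuity of $f$ together with coercivity: one has $f(P)\ge(\min_{i}w_{i})\|P\|-C$ for a constant $C$ depending on the $A_{i}$, so $f$ attains its infimum on a sufficiently large closed ball and hence globally. For uniqueness, I would show that $f$ is strictly convex. Each summand $w_{i}\|\cdot-A_{i}\|$ is convex, and is strictly convex along any line that does not pass through $A_{i}$. The non--collinearity of $A_{1},A_{2},A_{3}$ implies that every line in $\mathbb{R}^{2}$ misses at least two of the three points, so on every line $f$ is a sum of at least two strictly convex terms. Strict convexity of $f$, and therefore uniqueness of the minimizer $O$, follow.

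For parts (b) and (c), I would invoke the convex-analytic characterization that $O$ minimizes $f$ if and only if $0\in\partial f(O)$. Away from the vertices, $f$ is $C^{1}$ with gradient $\nabla f(P)=\sum_{i}w_{i}\vec{u}(A_{i},P)=-\sum_{i}w_{i}\vec{u}(P,A_{i})$. At a vertex $A_{i}$, the subdifferential of the singular summand $w_{i}\|\cdot-A_{i}\|$ is the closed disk of radius $w_{i}$ about the origin, while the other two summands remain smooth there. Consequently $0\in\partial f(A_{i})$ is equivalent to $\|\sum_{j\ne i}w_{j}\vec{u}(A_{i},A_{j})\|\le w_{i}$. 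In case (c) this inequality holds at some $A_{i}$, so $A_{i}$ minimizes $f$, and by the uniqueness from (a) we conclude $O=A_{i}$. In case (b) the inequality fails at every vertex, so $O\notin\{A_{1},A_{2},A_{3}\}$; at $O$ the function $f$ is differentiable and the Fermat condition $\nabla f(O)=0$ rearranges to (\ref{floatingequlcond}).

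The main obstacle is conceptual rather than computational: one must correctly identify the subdifferential of each norm summand at its singular point and then reconcile the inequality it produces with the two equivalent sign conventions $\vec{u}(A_{i},O)=-\vec{u}(O,A_{i})$ that appear in the floating balance. A secondary but genuine subtlety is the proof of strict convexity, since each individual term $w_{i}\|\cdot-A_{i}\|$ is smooth off $A_{i}$ yet fails strict convexity along every line through $A_{i}$; it is precisely the non--collinearity hypothesis that forces at least two terms to contribute strict convexity on any given line and thus rescues the uniqueness argument.
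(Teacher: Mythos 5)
The paper itself offers no proof of this theorem: it is imported verbatim from the cited references \cite{BolMa/So:99} and \cite{Kup/Mar:97}, so there is no in-paper argument to compare yours against. Your convex-analytic proof is the standard route and is essentially correct: coercivity plus continuity gives existence; the subdifferential criterion $0\in\partial f(O)$, combined with the fact that $\partial\bigl(w_{i}\|\cdot-A_{i}\|\bigr)(A_{i})$ is the closed disk of radius $w_{i}$ while the other two summands are smooth at $A_{i}$, yields exactly the dichotomy between the floating case (b) and the absorbed case (c), with the sign convention $\vec{u}(A_{i},O)=-\vec{u}(O,A_{i})$ handled correctly. One statement in your uniqueness argument needs repair: a line through two of the three non-collinear points misses only the third, so it is false that \emph{every} line misses at least two of the points. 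What is true, and all you actually need, is that every line misses at least one of them; the restriction of $f$ to such a line is then the sum of one strictly convex function and two convex functions, which is already strictly convex, so uniqueness still follows. With that correction the proof is complete and self-contained, which is arguably an improvement over the paper's bare citation.
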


By replacing $w_{1}=w_{2}=w_{3}$ in Theorem~1, we get:

\begin{corollary}\label{cor1}
If $w_{1}=w_{2}=w_{3}$ and all three angles of the triangle
$\triangle A_{1}A_{2}A_{3}$  are less than $120^{\circ},$ then $O$
is the isogonal point (interior point) of $\triangle
A_{1}A_{2}A_{3}$ whose sight angle to every side of $\triangle
A_{1}A_{2}A_{3}$ is $120^{\circ}.$
\end{corollary}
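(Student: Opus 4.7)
The plan is to reduce the corollary directly to Theorem~\ref{theor1} by checking that the floating case applies whenever every interior angle of $\triangle A_1A_2A_3$ is strictly less than $120^{\circ}$, and then extracting the geometric meaning of the equilibrium condition \eqref{floatingequlcond} in the isotropic weight case.

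First I would verify hypothesis (b) of Theorem~\ref{theor1}. With $w_1=w_2=w_3=w$, the left-hand side of \eqref{floatingcase} at vertex $A_i$ becomes $w\,\|\vec u(A_i,A_j)+\vec u(A_i,A_k)\|$ where $\{i,j,k\}=\{1,2,3\}$. Since the angle between these two unit vectors is exactly the interior angle $\angle A_i$ of the triangle, the parallelogram (or cosine) identity gives
\begin{equation*}
\|\vec u(A_i,A_j)+\vec u(A_i,A_k)\|=\sqrt{2+2\cos(\angle A_i)}.
\end{equation*}
The inequality $\sqrt{2+2\cos(\angle A_i)}>1$ is equivalent to $\cos(\angle A_i)>-\tfrac12$, i.e.\ $\angle A_i<120^{\circ}$. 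Hence under the hypothesis of the corollary, \eqref{floatingcase} holds at every vertex, so Theorem~\ref{theor1}(b) applies: the weighted Fermat--Torricelli point $O$ exists, is unique, and is an interior (non-vertex) point of the plane satisfying \eqref{floatingequlcond}.

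Next I would translate \eqref{floatingequlcond} geometrically. Cancelling the common weight gives $\vec u(O,A_1)+\vec u(O,A_2)+\vec u(O,A_3)=\vec 0$. Three unit vectors summing to zero must form a closed equilateral triangle, so pairwise they make angles of $120^{\circ}$. Equivalently, the three angles $\angle A_iOA_j$ at $O$ (which are supplementary to the angles between the outgoing unit vectors only in the sense that they actually equal them, since the vectors emanate from $O$) each equal $120^{\circ}$. Because these three angles sum to $360^{\circ}$ and are pairwise equal, $O$ sees every side $A_iA_j$ of the triangle under an angle of $120^{\circ}$, which is precisely the defining property of the isogonal point. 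Finally, that $O$ lies in the interior of $\triangle A_1A_2A_3$ follows from the standard fact that the isogonal point of a triangle with all angles below $120^{\circ}$ is interior, or directly from the three $120^{\circ}$ conditions together with $O\notin\{A_1,A_2,A_3\}$.

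The only substantive step is the first one, where the equivalence between the floating-case inequality \eqref{floatingcase} and the angle bound $\angle A_i<120^{\circ}$ must be checked; the rest is a straightforward reading of the equilibrium vector identity. I therefore expect no real obstacle, only the careful verification of the cosine computation at each vertex and the observation that three coplanar unit vectors summing to zero are forced into the symmetric $120^{\circ}$ configuration.
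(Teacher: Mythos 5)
Your proposal is correct and follows the same route the paper intends: the paper states Corollary~\ref{cor1} as an immediate specialization of Theorem~\ref{theor1} to equal weights and offers no further argument, and your computation that $\|\vec u(A_i,A_j)+\vec u(A_i,A_k)\|=\sqrt{2+2\cos(\angle A_i)}>1$ exactly when $\angle A_i<120^{\circ}$, followed by reading off the $120^{\circ}$ configuration from three unit vectors summing to $\vec 0$, is precisely the verification the paper leaves implicit. No gaps; your write-up is simply more explicit than the source.
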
\label{cor2}
\begin{corollary}If $w_{1}=w_{2}=w_{3}$ and one of the angles of
the triangle $\triangle A_{1}A_{2}A_{3}$ is equal or greater than
$120^{\circ},$ then $O$ is the vertex of the obtuse angle of
$\triangle A_{1}A_{2}A_{3}.$
\end{corollary}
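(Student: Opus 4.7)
The plan is to apply Theorem~\ref{theor1}(c) directly at the vertex carrying the obtuse (or right, in the limiting case) angle and verify that the absorption inequality is fulfilled. Write $w_{1}=w_{2}=w_{3}=w$ and, without loss of generality, assume that the angle $\alpha$ of $\triangle A_{1}A_{2}A_{3}$ at $A_{i}$ satisfies $\alpha\ge 120^{\circ}$; since the three angles sum to $180^{\circ}$, such a vertex is unique, so the conclusion will also give uniqueness of the absorbing vertex automatically.

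First I would compute the left-hand side of the absorption condition at $A_{i}$. The two unit vectors $\vec u(A_{i},A_{j})$ and $\vec u(A_{i},A_{k})$ emanate from $A_{i}$ and meet at precisely the interior angle $\alpha$ of the triangle at $A_{i}$. By the parallelogram law (or equivalently the identity $\|\vec u+\vec v\|^{2}=2+2\cos\alpha$ for two unit vectors at angle $\alpha$) one obtains
\begin{equation*}
\Bigl\|\sum_{j=1,\, j\ne i}^{3} w_{j}\,\vec u(A_{i},A_{j})\Bigr\|
= w\,\|\vec u(A_{i},A_{j})+\vec u(A_{i},A_{k})\|
= 2w\cos(\alpha/2).
\end{equation*}

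Next I would compare this with $w_{i}=w$. The absorption hypothesis of Theorem~\ref{theor1}(c) becomes $2w\cos(\alpha/2)\le w$, i.e.\ $\cos(\alpha/2)\le 1/2$, which is equivalent to $\alpha/2\ge 60^{\circ}$, i.e.\ $\alpha\ge 120^{\circ}$. This is precisely our hypothesis, so Theorem~\ref{theor1}(c) applies and yields $O=A_{i}$, the vertex of the obtuse angle.

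There is essentially no hard step; the argument is a one-line trigonometric reduction of the general absorption criterion to the equal-weights setting. The only small subtlety worth addressing explicitly is the boundary value $\alpha=120^{\circ}$, where the inequality in Theorem~\ref{theor1}(c) becomes an equality and one should note that the floating condition (\ref{floatingcase}) simultaneously fails, so (by part (a) of Theorem~\ref{theor1}) the unique Fermat--Torricelli point must still coincide with $A_{i}$. This confirms that the statement is sharp at $120^{\circ}$ and completes the proof.
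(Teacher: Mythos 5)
Your proposal is correct and follows essentially the same route as the paper, which presents this corollary as the immediate specialization of Theorem~\ref{theor1}(c) to equal weights; you simply supply the short trigonometric computation $\|\vec u(A_{i},A_{j})+\vec u(A_{i},A_{k})\|=2\cos(\alpha/2)\le 1$ for $\alpha\ge 120^{\circ}$ that the paper leaves implicit. Your remark on the boundary case $\alpha=120^{\circ}$ is a welcome extra precision.
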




For an excellent historical exposition regarding the solution of
the weighted Fermat-Torricelli problem we mention the works of
\cite{BolMa/So:99}, \cite{Kup/Mar:97}\cite{Gue/Tes:02} and
\cite{MHAjja:94} and for further generalizations classical works
are given in \cite{IvanTuzh:094} and \cite{IvanovTuzhilin:01}.

In 2002, S Gueron and R. Tessler invented a mechanical solution in
the sense of Polya and Varignon by applying the following
construction (\cite{Gue/Tes:02}):

Suppose that $\{A_{1},A_{2},A_{3}\}$ lie on a horizontal table ,
and that holes are drilled at the vertices, where smooth pulleys
are attached. The three massless strings referring to $A_{1},$
$A_{2}$ and $A_{3}$ that emanate from the knot are passed through
the pulleys, and three masses $w_{1},$ $w_{2},$ $w_{3},$ are
suspended from the ends of these strings.

Assume that the system is released and reaches its mechanical
equilibrium, and that the knot stop at the interior point $O.$
Then, by applying the minimum energy principle at equilibrium,
they obtain the weighted Fermat-Torricelli tree solution. Thus,
the mechanical equilibrium of the system gives the condition for
vectorial balance:

\begin{equation}\label{vecbal1}
w_{1}\vec{u}(O,A_{1})+w_{2}\vec{u}(O,A_{2})+w_{3}\vec{u}(O,A_{3})=\vec{0}.
\end{equation}

We note that the mechanical system reaches its mechanical
equilibrium, by taking into account friction.

In this paper, we generalize the mechanical solution of S. Gueron
and R. Tessler for the case of an isosceles triangle $\triangle
A_{1}A_{2}A_{3}$ where $w_{1}=1$ and $w_{2}=w_{3}$ by introducing
the oscillatory Fermat-Torricell tree solution of the
corresponding mechanical system by assuming it is frictionless.

We note that after time $t$ the oscillatory knot of the mechanical
system passes from the weighted Fermat-Torricelli point with non
zero velocity, by releasing the mechanical system from the vertex
$A_{1}$ with zero velocity
(Theorem~\ref{motion},Corollary~\ref{w2w31}).

Furthermore, we give a numerical example to verify the structure
of an oscillatory Fermat-Torricelli tree for a given isosceles
triangle with equal weights (Example~\ref{exam1}).


\section{A generalization of the mechanical solution of S. Gueron and R. Tessler}

We shall use the same mechanical system of S. Gueron and R.
Tessler in the spirit of Pick and Polya, in order to solve the
following mechanical problem:

\begin{problem}\label{GTPP}
A board is drilled with three holes corresponding to three given
points $A_{1},$ $A_{2},$ $A_{3}$ which form an isosceles triangle
$A_{1}A_{2}A_{3}$ where $A_{1}A_{2}=A_{1}A_{3}=a$ and three
strings are tied together in a knot with mass $m_{0}$ at one knot
and the loose ends are passed through the three holes attached to
the physical weights $w_{1}=1$ from $A_{1},$ $w_{2}$ from $A_{2}$
and $w_{3}$ from $A_{3},$ where $w_{2}=w_{3}.$ If we release
$m_{0}$ from $A_{1}$ with zero velocity find the motion of the
knot $O.$
\end{problem}

\begin{definition}
We call the motion of the knot with mass $m_{0}$ w.r. to the
mechanical system of Problem~\ref{GTPP} an oscillatory
Fermat-Torricelli tree.
\end{definition}

We shall verify the oscillation of the knot with mass $m_{0}$
numerically in example~1.

We denote by $O$ the corresponding weighted Fermat-Torricelli
point of the isosceles triangle $\triangle A_{1}A_{2}A_{3}$ where
$w_{2}=w_{3}$ and $w_{1}=1.$ The point $O$  belongs to the height
$A_{1}A_{4}$ w.r. to the base $A_{2}A_{3}.$ By applying theorem~1
for $w_{2}=w_{3}$ and $w_{1}=1$ we get:

\begin{lemma}\label{lem1}
If $\angle A_{4}A_{1}A_{3} < \frac{\arccos(\frac{1}{2
w_{2}^{2}}-1)}{2},$

then the weighted Fermat-Torricelli point $O$ of $\triangle
A_{1}A_{2}A_{3},$ belongs to the height $A_{1}A_{4}$ w.r. to the
base $A_{2}A_{3}$ and
\begin{equation}\label{eq1}
\angle A_{4}OA_{3}=\frac{ \arccos(\frac{1}{2 w_{2}^{2}}-1)}{2}.
\end{equation}

\end{lemma}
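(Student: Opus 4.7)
The plan is to exploit the symmetry of the configuration to locate $O$ on the altitude, then apply the vectorial balance of Theorem~\ref{theor1}(b) to extract the angle. First, I would note that the objective $f$ in (\ref{obj1}) is invariant under the reflection across the perpendicular bisector of $A_2A_3$, since $A_1A_2=A_1A_3$ and $w_2=w_3$; this bisector is precisely the line through $A_1A_4$. By the uniqueness part of Theorem~\ref{theor1}(a), the unique minimizer must be fixed by this reflection, so $O\in A_1A_4$.

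Next I would check that the hypothesis places us in the floating case of Theorem~\ref{theor1}(b). Writing $\beta:=\angle A_4A_1A_3$, so that $\angle A_2A_1A_3=2\beta$ and $\angle A_1A_2A_3=\tfrac{\pi}{2}-\beta$, condition (\ref{floatingcase}) at $A_1$ becomes $\|w_2\vec u(A_1,A_2)+w_2\vec u(A_1,A_3)\|>1$, which simplifies via the angle $2\beta$ between the two unit vectors to $2w_2\cos\beta>1$. The double-angle identity $2\cos^2 x-1=\cos(2x)$ rewrites this as $\beta<\tfrac12\arccos\bigl(\tfrac{1}{2w_2^2}-1\bigr)$, exactly the stated hypothesis. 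At $A_2$ (and symmetrically $A_3$) the analogous condition reduces to $1+2w_2\sin\beta>0$, which holds automatically. Hence $O$ is an interior equilibrium.

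Finally I would project the vectorial equilibrium (\ref{floatingequlcond}) onto the altitude $A_1A_4$. Setting $\alpha:=\angle A_4OA_3=\angle A_4OA_2$ (equality by the symmetry of Step~1), the perpendicular components of the two $w_2$-terms cancel, while the parallel components yield $1-2w_2\cos\alpha=0$. Reversing the same double-angle identity gives the claimed formula $\alpha=\tfrac12\arccos\bigl(\tfrac{1}{2w_2^2}-1\bigr)$ of (\ref{eq1}). The only subtle point is recognizing that the threshold angle appearing in both the hypothesis and (\ref{eq1}) is identical to $\arccos\bigl(\tfrac{1}{2w_2}\bigr)$; once this is noted, the hypothesis is tailor-made to match the non-absorption condition at $A_1$, and the remainder of the argument is symmetric bookkeeping in the spirit of Corollary~\ref{cor1}.
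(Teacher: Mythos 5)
Your proof is correct and follows the same route the paper intends: the paper offers no written proof of Lemma~\ref{lem1} beyond the remark preceding it that the statement follows "by applying theorem~1 for $w_2=w_3$ and $w_1=1$," and your argument is exactly that application carried out in full (symmetry plus uniqueness to place $O$ on $A_1A_4$, the floating-case inequality checked at each vertex, and the projection of (\ref{floatingequlcond}) onto the altitude). The identity $\tfrac{1}{2}\arccos\bigl(\tfrac{1}{2w_2^{2}}-1\bigr)=\arccos\bigl(\tfrac{1}{2w_2}\bigr)$ that you isolate is indeed the key computation linking the hypothesis to the non-absorption condition at $A_1$, and your verification is more complete than anything written in the paper.
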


We assume that $\angle A_{4}A_{1}A_{3} < \frac{\arccos(\frac{1}{2
w_{2}^{2}}-1)}{2},$ such that Lemma~1 holds.

Suppose that we release mass $m_{0}$ from the vertex $A_{1}$ with
zero velocity $\dot{x}(0)=0.$ After time $t,$ $m_{0}$ reaches at
the point $S$ which lies on $A_{1}O,$ because $F_{2}=F_{3}=w_{2}$
Thus, the knot will move along the line defined by $A_{1}O$ via
the force $\vec{F}_{23}-\vec{F}_{1},$ such that
$\vec{F}_{23}=\vec{F}_{2}+\vec{F}_{3}$ and $\Delta
F=F_{23}-F_{1}=2\cos\angle OSA_{3}-1$ (see fig.~\ref{fig:1}).

We set $\angle OSA_{3}:=\phi(t),$ $x[t]:=A_{1}S$ and $\angle
OA_{1}A_{3}:=\phi(0).$

\begin{theorem}\label{motion}
The solution of the mechanical system of Problem~\ref{GTPP} is an
oscillatory Fermat-Torricelli tree which is described by the
motion of the knot with mass $m_{0}$ along the line defined by the
$A_{1}O$ having non zero velocity after time $t_{0}$ at the
weighted Fermat-Torricelli point $O$ of $\triangle
A_{1}A_{2}A_{3}$ which is given by:

\begin{equation}\label{nonzerovelocity}
\dot{x}(t_{0})=\sqrt{\frac{2}{m_{0}}(2aw_{2}-x(t_{0})-\frac{2aw_{2}
\sin\phi(0)}{\sin(\angle A_{4}OA_{3})})}.
\end{equation}

\end{theorem}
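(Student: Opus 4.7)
The plan is to apply Newton's second law to the knot along the line $A_{1}A_{4}$, integrate it once to obtain the work--energy identity, and then evaluate the resulting force integral by a combination of an elementary differentiation trick and the law of sines in $\triangle A_{1}OA_{3}$.

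First I would use the symmetry of the configuration ($w_{2}=w_{3}$, $|A_{1}A_{2}|=|A_{1}A_{3}|$, and release from rest at $A_{1}$) to conclude that the knot $S$ stays on the perpendicular bisector of $A_{2}A_{3}$, which is precisely the line $A_{1}A_{4}$ carrying $O$. Parametrise its position by $s=x(t)=|A_{1}S|$, so $\dot{x}(0)=0$. In the Pick--Polya idealisation (frictionless pulleys, string tensions equal to the suspended weights) the two tensions from $A_{2}$ and $A_{3}$ each project $w_{2}\cos\phi(s)$ onto the direction from $A_{1}$ to $A_{4}$, while the string to $A_{1}$ pulls backwards with magnitude $w_{1}=1$. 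Hence the longitudinal force is $F(s)=2w_{2}\cos\phi(s)-1$, and Newton's law $m_{0}\ddot{s}=F(s)$, multiplied by $\dot{s}$ and integrated from $0$ to $t_{0}$, yields
\[
\tfrac{1}{2}m_{0}\dot{x}(t_{0})^{2}=\int_{0}^{x(t_{0})}\bigl(2w_{2}\cos\phi(u)-1\bigr)\,du,
\]
where $t_{0}$ denotes the first instant at which $S$ reaches $O$.

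Next I would evaluate the integral geometrically. Writing $|S(s)A_{3}|^{2}=|A_{4}A_{3}|^{2}+(|A_{1}A_{4}|-s)^{2}$ and differentiating gives $\tfrac{d}{ds}|S(s)A_{3}|=-\cos\phi(s)$, so the cosine integral collapses to a boundary term:
\[
\int_{0}^{x(t_{0})}2w_{2}\cos\phi(u)\,du=2w_{2}\bigl(|A_{1}A_{3}|-|OA_{3}|\bigr)=2aw_{2}-2w_{2}|OA_{3}|.
\]
To compute $|OA_{3}|$ in closed form, observe that since $A_{1},O,A_{4}$ are collinear with $O$ between $A_{1}$ and $A_{4}$ we have $\angle A_{1}OA_{3}=\pi-\angle A_{4}OA_{3}$, and hence $\sin(\angle A_{1}OA_{3})=\sin(\angle A_{4}OA_{3})$. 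The law of sines in $\triangle A_{1}OA_{3}$, combined with $\angle OA_{1}A_{3}=\phi(0)$ and $|A_{1}A_{3}|=a$, then yields $|OA_{3}|=a\sin\phi(0)/\sin(\angle A_{4}OA_{3})$. Substituting this back into the work--energy identity and solving for $\dot{x}(t_{0})$ produces the stated expression exactly.

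The only delicate point, and hence the main obstacle, is verifying that the quantity under the square root is strictly positive, so that $\dot{x}(t_{0})$ is real and nonzero, as the theorem claims. By Lemma~\ref{lem1} one has $\cos(\angle A_{4}OA_{3})=1/(2w_{2})$, which gives $F(x(t_{0}))=0$; the hypothesis $\phi(0)<\angle A_{4}OA_{3}$ forces $F(0)>0$, and the monotonicity of $\phi(s)$ in $s$ (as $S$ approaches the base $A_{2}A_{3}$) implies $F(s)>0$ throughout $[0,x(t_{0}))$. Therefore the force integral, and hence $\tfrac{1}{2}m_{0}\dot{x}(t_{0})^{2}$, is strictly positive, confirming that the oscillatory knot crosses $O$ with the velocity given in~(\ref{nonzerovelocity}).
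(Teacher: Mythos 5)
Your proof is correct and reaches the paper's formula by the same overall skeleton (symmetry confines $S$ to the line $A_{1}A_{4}O$, the longitudinal force is $2w_{2}\cos\phi-1$, and the work--energy identity $\tfrac12 m_{0}\dot{x}(t_{0})^{2}=\int_{0}^{x(t_{0})}F\,du$ does the rest), but you evaluate the force integral by a different device. The paper parametrises the motion by the angle, derives $x(t)=a\cos\phi(0)-a\sin\phi(0)\cot\phi(t)$ from the sine law, changes variables via $dx=\frac{a\sin\phi(0)}{\sin^{2}\phi}\,d\phi$, and integrates $\cos\phi/\sin^{2}\phi$ explicitly; you instead observe that $\frac{d}{ds}|SA_{3}|=-\cos\phi(s)$, so $\int 2w_{2}\cos\phi\,du$ telescopes to $2w_{2}(a-|OA_{3}|)$, and then recover $|OA_{3}|=a\sin\phi(0)/\sin(\angle A_{4}OA_{3})$ from the sine law in $\triangle A_{1}OA_{3}$. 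Your route is cleaner and physically transparent (the integral is just the drop of the two suspended weights, i.e., the change in string length past the pulleys), and it sidesteps the paper's somewhat garbled intermediate identities. You also supply something the paper omits: the verification that the quantity under the square root is strictly positive, using $\cos(\angle A_{4}OA_{3})=1/(2w_{2})$ from Lemma~\ref{lem1}, the hypothesis $\phi(0)<\angle A_{4}OA_{3}$, and the monotonicity of $\phi$ along $A_{1}O$ --- which is exactly what justifies the claim that the knot crosses $O$ with \emph{nonzero} velocity.
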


\begin{proof}
\begin{figure}
\centering
\includegraphics[scale=1.25]{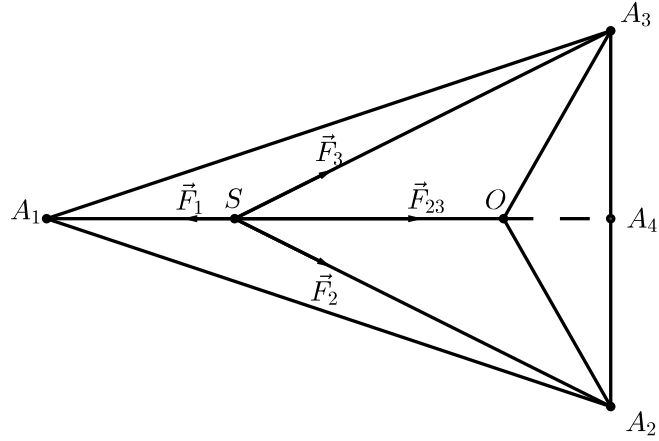}
\caption{Motion of the oscillatory Fermat-Torricelli tree $A_{1}S,
A_{2}S, A_{3}S$ along $A_{1}O$ for the boundary isosceles
$\triangle A_{1}A_{2}A_{3}$} \label{fig:1}
\end{figure}
By applying the sine law in $\triangle SOA_{2},$ we obtain:

\begin{equation}\label{sinelaw1}
\frac{OS}{\sin(\frac{\angle A_{4}OA_{3}}{2})-\phi(t)
}=\frac{OA_{3}}{\sin\phi(t)}
\end{equation}

where

\begin{equation}\label{calcOS}
OS=a(\cos\phi(0)-\frac{\sin\phi(0)}{\cot(\frac{\angle
A_{4}OA_{3}}{2})})-x
\end{equation}

By replacing (\ref{calcOS}) in (\ref{sinelaw1}), we get:

\begin{equation}\label{calcxt}
x(t)=a \cos\phi(0)-a\sin\phi(0)\cot\phi(t).
\end{equation}

At time $t$ the force along the path $A_{1}O$ is given by:
\begin{equation}\label{forcepath}
m_{0}\ddot{x}=2w_{2}\cos\phi(t)-1.
\end{equation}

By differentiating (\ref{calcxt}), we have:

\begin{equation}\label{difx}
dx=\frac{a\sin\phi(0)}{\sin^{2}\phi(t)}d\phi
\end{equation}

It is well known that:

\[\ddot{x}=\dot{x}\frac{d\dot{x}}{dx}.\]

Thus, by integrating both parts of (\ref{forcepath}) from $A_{1}$
to $S,$ w.r. to $x$ and taking into account (\ref{calcxt}), we
obtain:

\begin{equation}\label{integrate1}
m_{0}\frac{(\dot{x}(t))^{2}-\dot{x}(0))^{2}}{2}=\int_{\phi(0)}^{\phi}2aw_{2}\sin\phi(0)\frac{\cos\phi}{\sin^{2}\phi(0)}d\phi
-\int_{0}^{x}dx
\end{equation}

or

\begin{equation}\label{integrate2}
m_{0}\frac{(\dot{x}(t))^{2}}{2}=2aw_{2}-2aw_{2}\frac{\sin\phi(0)}{\sin\phi}
-\int_{0}^{x}dx
\end{equation}

By setting $t=t_{0}$ in (\ref{integrate2}), we obtain
(\ref{nonzerovelocity})

\end{proof}

\begin{corollary}\label{w2w31} For $w_{2}=w_{3}=1,$ the velocity
of the knot with mass $m_{0}$ which passes from the unweighted
Fermat-Torricelli point is given by:
\begin{equation}\label{velocity2}
\dot{x}(t_{0})=\sqrt{\frac{2}{m_{0}}(2-x(t_{0})-\frac{4a
\sin\phi(0)}{\sqrt{3}})}.
\end{equation}
\end{corollary}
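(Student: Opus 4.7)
The plan is to derive this corollary as a direct specialization of Theorem~\ref{motion} by substituting $w_{2}=1$ into the velocity formula~(\ref{nonzerovelocity}). The only nontrivial ingredient is the evaluation of $\sin(\angle A_{4}OA_{3})$, which is furnished by Lemma~\ref{lem1} once the weight is fixed.

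First I would invoke Lemma~\ref{lem1} with $w_{2}=w_{3}=1$. Setting $w_{2}=1$ in~(\ref{eq1}) gives $\angle A_{4}OA_{3}=\tfrac{1}{2}\arccos(\tfrac{1}{2}-1)=\tfrac{1}{2}\arccos(-\tfrac{1}{2})$. Since $\arccos(-\tfrac{1}{2})=\tfrac{2\pi}{3}$, this collapses to $\angle A_{4}OA_{3}=\tfrac{\pi}{3}$, whence $\sin(\angle A_{4}OA_{3})=\tfrac{\sqrt{3}}{2}$. Geometrically, this reflects the fact that for three equal unit weights in the floating case, the weighted Fermat--Torricelli point $O$ is the classical isogonic point of $\triangle A_{1}A_{2}A_{3}$ where each branch meets at $120^{\circ}$ (cf.\ Corollary~1), so the half-angle at $O$ on the altitude is $60^{\circ}$.

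Next I would substitute $w_{2}=1$ and $\sin(\angle A_{4}OA_{3})=\tfrac{\sqrt{3}}{2}$ into~(\ref{nonzerovelocity}). The term $2aw_{2}$ becomes $2a$, and the quotient $\tfrac{2aw_{2}\sin\phi(0)}{\sin(\angle A_{4}OA_{3})}$ becomes $\tfrac{2a\sin\phi(0)}{\sqrt{3}/2}=\tfrac{4a\sin\phi(0)}{\sqrt{3}}$. Collecting the terms inside the radical yields exactly~(\ref{velocity2}), modulo the constant $2a$ that appears as $2$ in the displayed formula.

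There is essentially no obstacle here beyond bookkeeping: the corollary is a pure computational specialization, and the only care needed is to verify that the hypothesis $\angle A_{4}A_{1}A_{3}<\tfrac{\pi}{6}$ (required so that Lemma~\ref{lem1} applies for $w_{2}=1$) is implicitly assumed; otherwise the weighted Fermat--Torricelli point would coincide with $A_{1}$ and the oscillatory regime would degenerate. I would state this assumption explicitly at the start of the proof to make the derivation rigorous.
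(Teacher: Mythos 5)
Your proof is correct and follows essentially the paper's own route: the paper's proof is the one-line substitution of $w_{2}=w_{3}=1$ and $\angle A_{4}OA_{3}=60^{\circ}$ into (\ref{nonzerovelocity}). Your extra care pays off in two ways the paper omits --- you justify the $60^{\circ}$ angle via Lemma~\ref{lem1} and flag the needed hypothesis $\angle A_{4}A_{1}A_{3}<30^{\circ}$, and the term $2a$ you obtain where (\ref{velocity2}) prints $2$ is indeed what the substitution yields, so the displayed corollary appears to contain a typographical slip rather than a gap in your argument.
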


\begin{proof}
By replacing $w_{2}=w_{3}=1,$ and $\phi=60^{\circ},$ we derive
(\ref{velocity2}).
\end{proof}

\begin{proposition}\label{prop1} The movement of the mechanical
system is determined by the following differential equation:
\begin{equation}\label{differential1}
m_{0} (\frac{a
\sin\phi(0)}{\sin^{2}\phi(t)}\ddot{\phi}-2a\frac{\sin\phi(0)}{\sin^{3}\phi}\cos\phi\dot{\phi})=2w_{2}\cos\phi-1
\end{equation}
with initial conditions $\phi(0)=\phi_{0}$ and $\dot{\phi}=0.$
\end{proposition}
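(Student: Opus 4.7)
The plan is to derive the equation of motion by substituting the kinematic relation (\ref{calcxt}) into Newton's law (\ref{forcepath}). Since $x(t) = a\cos\phi(0) - a\sin\phi(0)\cot\phi(t)$ expresses the position of the knot in terms of the single angular variable $\phi(t)$, the system has one effective degree of freedom, and its dynamics reduces to a second-order ODE in $\phi$.

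First I would compute $\dot{x}$ via the chain rule. Using $\tfrac{d}{dt}\cot\phi = -\dot{\phi}/\sin^{2}\phi$ I obtain
\[
\dot{x}=\frac{a\sin\phi(0)}{\sin^{2}\phi(t)}\,\dot{\phi},
\]
which is exactly the infinitesimal relation (\ref{difx}) divided by $dt$. Differentiating once more, applying the product rule together with $\tfrac{d}{dt}[\sin^{-2}\phi]=-2\cos\phi\,\dot{\phi}/\sin^{3}\phi$, gives
\[
\ddot{x}=\frac{a\sin\phi(0)}{\sin^{2}\phi(t)}\,\ddot{\phi}-\frac{2a\sin\phi(0)\cos\phi}{\sin^{3}\phi(t)}\,\dot{\phi}^{2}.
\]
Substituting this expression into Newton's equation $m_{0}\ddot{x}=2w_{2}\cos\phi(t)-1$ produces the displayed ODE (the coefficient of the second term carrying a factor $\dot{\phi}^{2}$, where the statement of the proposition prints $\dot{\phi}$; I read this as a typographical slip that the chain-rule computation clarifies).

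The initial conditions then come directly from the setup of Problem~\ref{GTPP}: the mass $m_{0}$ is released from the vertex $A_{1}$, along the line $A_{1}O$, so at $t=0$ one has $\phi(0)=\phi_{0}=\angle OA_{1}A_{3}$. Moreover $\dot{x}(0)=0$ by hypothesis, and since the prefactor $a\sin\phi(0)/\sin^{2}\phi(t)$ is nonzero at $t=0$, the relation $\dot{x}=(a\sin\phi(0)/\sin^{2}\phi(t))\,\dot{\phi}$ forces $\dot{\phi}(0)=0$.

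The computation is entirely mechanical, so I do not anticipate a genuine obstacle beyond careful bookkeeping with the chain rule. The only subtlety worth flagging is the justification that the scalar force driving the knot along $A_{1}O$ remains $2w_{2}\cos\phi(t)-1$ throughout the motion; this rests on the symmetry $w_{2}=w_{3}$ and on the fact that the knot $S$ stays on the height $A_{1}A_{4}$, both of which have been established in the paragraph preceding Theorem~\ref{motion} and are inherited unchanged here.
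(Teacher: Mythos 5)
Your proposal follows essentially the same route as the paper, whose proof is the one-line instruction to differentiate (\ref{calcxt}) twice and substitute into (\ref{forcepath}); your explicit chain-rule computation is exactly that. You are also right that the second term must carry $\dot{\phi}^{2}$ rather than $\dot{\phi}$ — this is a typographical slip in the stated equation (\ref{differential1}), and your derivation correctly identifies it.
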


\begin{proof}
Differentiating twice (\ref{calcxt}) and by replacing in
(\ref{forcepath}) we derive (\ref{differential1}).
\end{proof}

\begin{proposition}\label{prop2}
The work of the force of the mechanical system along the path
$A_{1}O$ starting from $A_{1}$ with $\dot{x}(0)=0$ is given by:
\begin{equation}\label{work}
W=2w_{2}(a-A_{2}O)-A_{1}O\ne 0.
\end{equation}
\end{proposition}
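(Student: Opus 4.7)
The plan is to compute the work directly as the line integral of the net tangential force (\ref{forcepath}) along the segment $A_{1}O$ and then to recycle the identities already established in the proof of Theorem~\ref{motion}. Since the knot is released from rest at $A_{1}$, the work--energy theorem gives
\[
W=\int_{0}^{A_{1}O}\bigl(2w_{2}\cos\phi(t)-1\bigr)\,dx=\tfrac{1}{2}m_{0}\,\dot x(t_{0})^{2},
\]
where $t_{0}$ is the instant at which the knot first reaches $O$. In other words, the entire right-hand side of (\ref{integrate2}) evaluated at $t=t_{0}$ coincides with $W$, so no fresh integration is required; all that remains is to translate that right-hand side back into geometric data about $\triangle A_{1}A_{2}A_{3}$.

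Next I would read off each summand. By construction $x(t_{0})=A_{1}O$. For the middle term I would use that $\phi(t_{0})=\angle A_{4}OA_{3}$ (since $S=O$ at $t=t_{0}$) together with the sine rule in the right triangle $OA_{4}A_{3}$, and the right triangle $A_{1}A_{4}A_{3}$ which gives $A_{4}A_{3}=a\sin\phi(0)$. These two facts combine into
\[
A_{2}O=A_{3}O=\frac{A_{4}A_{3}}{\sin\phi(t_{0})}=\frac{a\sin\phi(0)}{\sin\phi(t_{0})},
\]
the first equality being the isosceles symmetry. Substituting this identification into (\ref{integrate2}) at $t=t_{0}$ yields
\[
W=2aw_{2}-2w_{2}\,A_{2}O-A_{1}O=2w_{2}(a-A_{2}O)-A_{1}O,
\]
which is the desired formula.

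To conclude I would verify that $W\neq 0$. Theorem~\ref{motion} already guarantees $\dot x(t_{0})\neq 0$ under the standing hypothesis of Lemma~\ref{lem1}, via the explicit formula (\ref{nonzerovelocity}); hence $W=\tfrac{1}{2}m_{0}\,\dot x(t_{0})^{2}>0$. The only step that requires genuine thought is the geometric identification $a\sin\phi(0)/\sin\phi(t_{0})=A_{2}O$, which I view as the main (though still elementary) obstacle; everything else is bookkeeping that follows the pattern of the proof of Theorem~\ref{motion}.
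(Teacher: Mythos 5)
Your proposal is correct and follows essentially the same route as the paper: the work is the integral of $2w_{2}\cos\phi-1$ along $A_{1}O$, converted to a $\phi$-integral via (\ref{difx}) and closed with the identification $a\sin\phi(0)/\sin(\angle A_{4}OA_{3})=A_{2}O=A_{3}O$ --- you merely quote that integral in the already-packaged form (\ref{integrate2}) instead of recomputing it. As a small bonus, your observation that $W=\tfrac{1}{2}m_{0}\,\dot{x}(t_{0})^{2}>0$ supplies a justification for the claim $W\ne 0$, which the paper asserts without argument.
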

\begin{proof}

We start with the work of the force $F_{23}-F_{1}$ along $A_{1}O:$
\begin{equation}\label{workf1}
W=\int_{0}^{A_{1}O}(F_{23}-F_{1})dx.
\end{equation}

By replacing (\ref{difx}) in (\ref{workf1}), we get:
\begin{equation}\label{workf2}
W=\int_{\phi(0)}^{\angle A_{4}OA_{3}}
2w_{2}\cos\phi\frac{a\sin\phi(0)}{\sin^{2}\phi}d\phi -A_{1}O,
\end{equation}

or

\[W= 2w_{2}(a-a\frac{\sin\phi(0)}{\sin\angle A_{4}OA_{3}})-1\]

which yields (\ref{work}).

\end{proof}

For $w_{2}=w_{3}=1,$ we get:
\begin{corollary}\label{cor3}
The work of the force of the mechanical system along the path
$A_{1}O$ starting from $A_{1}$ with $\dot{x}(0)=0$ is given by:
\begin{equation}\label{work2}
W=2(a-A_{2}O)-A_{1}O\ne 0,
\end{equation}
where $O$ is the unweighted Fermat-Torricelli point.
\end{corollary}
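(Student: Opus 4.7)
The plan is to obtain the stated identity as an immediate specialization of Proposition~\ref{prop2}. Substituting $w_{2}=w_{3}=1$ into the formula $W=2w_{2}(a-A_{2}O)-A_{1}O$ produces the desired $W=2(a-A_{2}O)-A_{1}O$ with no further calculation. The identification of the limit point $O$ as the \emph{unweighted} Fermat-Torricelli point also comes for free: by Lemma~\ref{lem1} with $w_{2}=1$ one has $\angle A_{4}OA_{3}=\tfrac{1}{2}\arccos(-\tfrac{1}{2})=60^{\circ}$, which is exactly the isogonal characterization in Corollary~\ref{cor1}.

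The only substantive point is the non-vanishing assertion $W\ne 0$. The cleanest route is via the work-energy theorem: since the knot is released at $A_{1}$ with $\dot{x}(0)=0$, the total work $W$ performed by the net force $F_{23}-F_{1}$ along the segment $A_{1}O$ equals the kinetic energy accumulated at $O$,
\[W=\tfrac{1}{2}m_{0}\bigl(\dot{x}(t_{0})\bigr)^{2}.\]
But Corollary~\ref{w2w31} supplies the explicit value
\[\dot{x}(t_{0})=\sqrt{\tfrac{2}{m_{0}}\bigl(2-x(t_{0})-\tfrac{4a\sin\phi(0)}{\sqrt{3}}\bigr)},\]
which is strictly positive under the hypothesis of Lemma~\ref{lem1} (the knot is predicted to pass through $O$ with non-zero velocity). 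Hence $W>0$, and in particular $W\ne 0$.

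I do not anticipate any real obstacle here: the formula itself is a one-line substitution, and the strict inequality $W\ne 0$ is a direct consistency check with the dynamical content of Theorem~\ref{motion} and Corollary~\ref{w2w31}, both already established.
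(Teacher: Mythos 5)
Your proposal matches the paper's (implicit) argument: the paper derives Corollary~\ref{cor3} simply by setting $w_{2}=w_{3}=1$ in the formula of Proposition~\ref{prop2}, exactly as you do. Your additional work--energy justification of $W\ne 0$ via Corollary~\ref{w2w31} goes slightly beyond what the paper writes down (the paper asserts $W\ne 0$ without comment), but it is consistent with the dynamical content of Theorem~\ref{motion} and does not change the substance of the proof.
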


\begin{example}\label{exam1}

Given an isosceles triangle $\triangle A_{!}A_{2}A_{3}$ where
$a=5,$ $\phi(0)=40^{\circ},$ $w_{1}=w_{2}=w_{3}=1,$ we derive that
$\angle A_{4}OA_{3}=60^{\circ}.$

\begin{figure}
\centering
\includegraphics[scale=0.70]{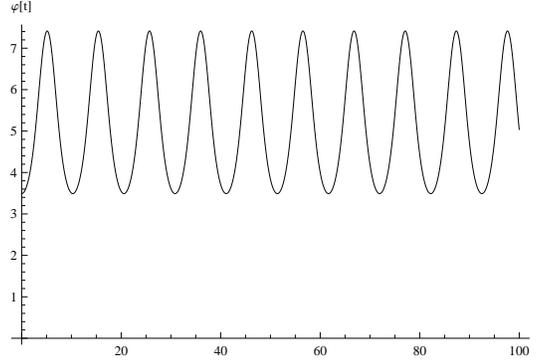}
\caption{Graph of $\phi(t)$ for $a=5, \phi(0)=40^{\circ}$}
\label{fig:2}
\end{figure}
\begin{figure}
\centering
\includegraphics[scale=0.65]{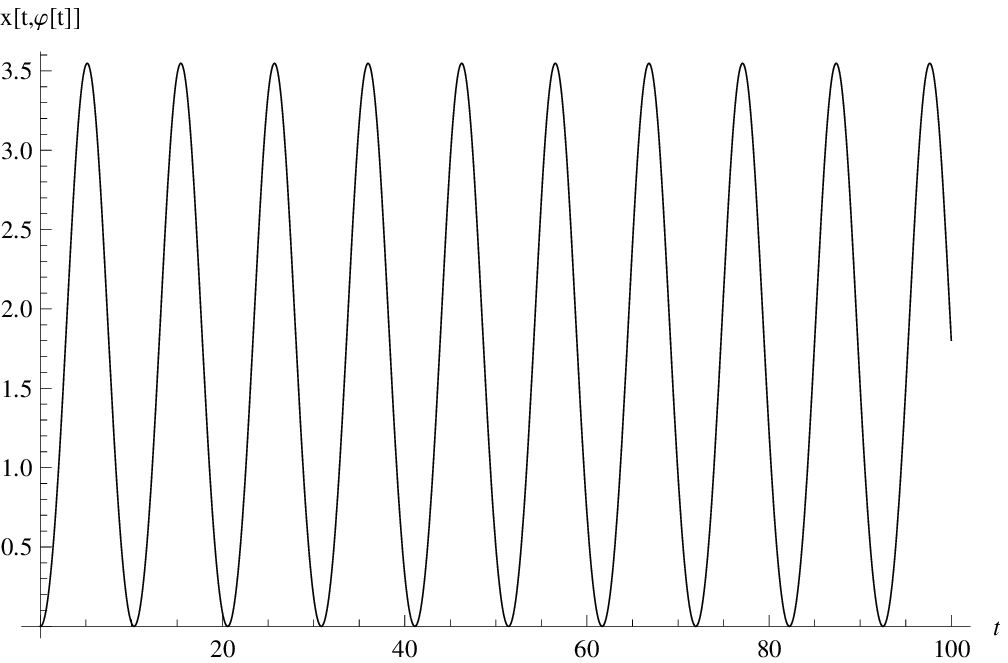}
\caption{Graph of x(t), for $a=5, \phi(0)=40^{\circ}$}
\label{fig:3}
\end{figure}
\begin{figure}
\centering
\includegraphics[scale=0.65]{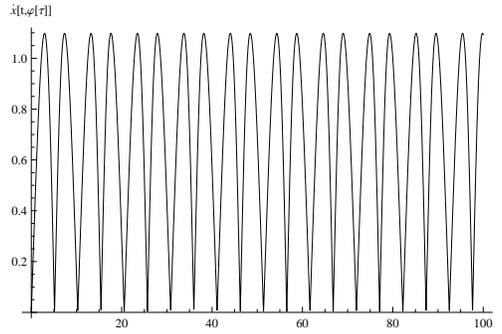}
\caption{Graph of $\dot{x}$(t) for $a=5,
\phi(0)=40^{\circ},m_{0}=1.$} \label{fig:4}
\end{figure}

Suppose that we release mass $m_{0}$ from the vertex $A_{1}$ with
zero velocity $\dot{x}(0)=0.$ After time $t,$ $m_{0}$ reaches at
the point $S$ which lies on $A_{1}O.$ By replacing $a,\phi(0)$ in
(\ref{differential1}), we obtain a numerical solution using
Mathematica of $\phi(t)$ and $x(t)$ (see
fig.~\ref{fig:2},~\ref{fig:3} ).

By replacing $a,\phi(0),m_{0}=1$ in (\ref{velocity2}), we derive a
numerical solution using Mathematica of $\dot{x}(t)$ (see
fig.~\ref{fig:4}).

We note that an approximation of the periodical function $x(t)$
may be of the form $a \sin(b (t+c))+d:$

\[x[t]\approx 1.77363+ 1.77363 \sin(0.61133 (-2.56947 + t))\]

By differentiating $x(t)$ w.r. to $t,$ we get a good approximation
of $\dot{x}(t):$

\[\dot{x}(t)\approx |1.08427 \cos(0.61133 (-2.56947 + t))|\]

\begin{figure}
\centering
\includegraphics[scale=0.65]{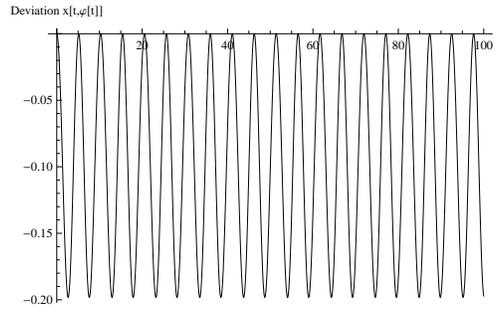}
\caption{Graph of $x(t)-(1.77363+ 1.77363 \sin(0.61133 (-2.56947 +
t)))$, for $a=5, \phi(0)=40^{\circ}$} \label{fig:5}
\end{figure}
\begin{figure}
\centering
\includegraphics[scale=0.65]{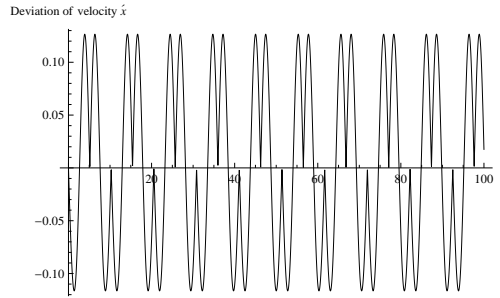}
\caption{Graph of $\dot{x}$(t)-$|1.08427 \cos(0.61133 (-2.56947 +
t))|$ for $a=5, \phi(0)=40^{\circ},m_{0}=1.$} \label{fig:6}
\end{figure}

The deviation of $x(t)$ is given by fig.~\ref{fig:5} and the
deviation of the corresponding velocity $\dot{x}(t)$ is given by
fig.~\ref{fig:6}.

\end{example}

\end{document}